\theoremstyle{plain}
\newtheorem{theorem}{Theorem}[section]
\newtheorem{lemma}[theorem]{Lemma}
\newtheorem{proposition}[theorem]{Proposition}
\newtheorem{definition}[theorem]{Definition}
\newtheorem{notation}[theorem]{Notation}
\theoremstyle{remark}
\declaretheorem[name=Remark,sibling=theorem,qed={\lower-0.3ex\hbox{$\diamond$}}]{remark}
\DeclareMathOperator{\GSp}{GSp}
\DeclareMathOperator{\Sym}{Sym}
\DeclareMathOperator{\GL}{GL}
\DeclareMathOperator{\Gal}{Gal}
\DeclareMathOperator{\rank}{rank}
\newcommand{\alg}{\mathrm{alg}}
\newcommand{\Iw}{\mathrm{Iw}}
\renewcommand{\AA}{\mathbf{A}}
\newcommand{\CC}{\mathbf{C}}
\newcommand{\QQ}{\mathbf{Q}}
\newcommand{\ZZ}{\mathbf{Z}}
\newcommand{\cO}{\mathcal{O}}
\newcommand{\cT}{\mathcal{T}}
\newcommand{\wH}{\widetilde{H}}
\newcommand{\QQbar}{\overline{\QQ}}
\newcommand{\Qp}{\QQ_p}
\newcommand{\Af}{\AA_{\mathrm{f}}}
\newcommand{\mpi}{\mu_{p^\infty}}
\renewcommand{\le}{\leqslant}
\renewcommand{\ge}{\geqslant}
\numberwithin{equation}{section}
\author{David Loeffler}
\address[Loeffler]{Mathematics Institute\\
 Zeeman Building, University of Warwick\\
 Coventry CV4 7AL, UK.}
\email{d.a.loeffler@warwick.ac.uk}
\urladdr{\href{http://orcid.org/0000-0001-9069-1877}{\upshape\path{0000-0001-9069-1877}}}
\author{Sarah Livia Zerbes}
\address[Zerbes]{Department of Mathematics \\
 University College London\\
 Gower Street, London WC1E 6BT, UK.}
\email{s.zerbes@ucl.ac.uk}
\urladdr{\href{https://orcid.org/0000-0001-8650-9622}{\upshape\path{0000-0001-8650-9622}}}
\title{On the Bloch--Kato conjecture for the symmetric cube}
\thanks{The authors are grateful to acknowledge financial support from the European Research Council (ERC Consolidator Grant ``Euler Systems and the Birch--Swinnerton-Dyer conjecture'') and the Royal Society (University Research Fellowship ``L-functions and Iwasawa theory'').}
\begin{document}
 \begin{abstract}
  We prove one inclusion in the Iwasawa main conjecture, and the Bloch--Kato conjecture in analytic rank 0, for the symmetric cube of a level 1 modular form.
 \end{abstract}
 \maketitle

\section{Introduction}

 In this short note, we study the arithmetic of the $p$-adic symmetric cube Galois representation associated to a level 1 modular cusp form. We prove that the Bloch--Kato conjecture holds in analytic rank $0$ for the critical twists of this representation (corresponding to the critical values of the $L$-function); and we prove one inclusion of the cyclotomic Iwasawa Main Conjecture for this representation, showing that the characteristic ideal of the Selmer group divides the $p$-adic $L$-function. We deduce these results from the results of \cite{LZ20} on the arithmetic of automorphic representations of $\GSp(4)$, using the existence of a symmetric cube lifting from $\GL(2)$ to $\GSp(4)$.

 \begin{remark}\
  \begin{itemize}
   \item The results of this paper were inspired by the striking work of Haining Wang \cite{wang20}, where he proves many cases of the Bloch--Kato conjecture for symmetric cubes of modular forms via arithmetic level-raising on triple products of Shimura curves. Our results are complementary to those of Wang, since we restrict to level 1 (and hence weight $\ge 12$), while he considers weight 2 and general level.

   \item The converse implication of the Bloch--Kato conjecture in this setting -- i.e.~that the vanishing of the central critical $L$-value implies the existence of a non-zero Selmer class -- will be treated in forthcoming work of Samuel Mundy, using a functorial lifting to the exceptional group $G_2$.

   \item The restriction to modular forms of level $1$ is carried over from the results in \emph{op.cit}., which rely on forthcoming work of Barrera, Dimitrov and Williams regarding interpolation of $p$-adic $L$-functions for $\GL(2n)$ in finite-slope families. At present these results are only available for level 1, but we are confident that it should be possible to relax this assumption. In this case, all of the results of this paper will generalize without change to modular forms of higher level.
   \qedhere
  \end{itemize}
 \end{remark}

 In forthcoming work, we will prove the analogous results for the Galois representations attached to quadratic Hilbert modular forms, using the twisted Yoshida lift to $\GSp(4)$.


\section{Large Galois image}

 Let $f$ be a cuspidal, new, non-CM eigenform of weight $k \ge 2$ and level $\Gamma_1(N)$ for some $N \ge 1$, and write $F$ for the coefficient field of $f$. 
 Throughout this section, let $p$ be a prime $>3$. Let $v$ be a prime of $F$ above $p$, and denote by $V_{f,v}$ the 2-dimensional $F_v$-linear Galois representation associated to $f$. We are interested in the representation $\Sym^3 V_{f,v}$.

 \begin{remark}\
  \begin{enumerate}
   \item Our conventions are such that the trace of \emph{geometric} Frobenius at a prime $\ell \nmid pN$ is the Hecke eigenvalue $a_\ell(f)$.
   \item The Hodge--Tate weights of $\left(\Sym^3V_{f,v}\right)|_{G_{\Qp}}$ are $\{3-3k,\, 2-2k,\, 1-k,\, 0\}$.
   \item We have $\left(\Sym^3V_{f,v}\right)^*\cong \left(\Sym^3V_{f,v}\right)(3k-3)$.
   \item  The critical twists of $\Sym^3V_{f,v}$ are $\left(\Sym^3V_{f,v}\right)(j)$ for $k\le j\le 2k-2$. \qedhere
  \end{enumerate}
 \end{remark}

 \begin{proposition}\label{prop:symcubelargeimage}
  If $p\gg 0$, then the $p$-adic representation $W_\Pi^*=\Sym^3V^*_{f,v}$ has large image: there exists a $G_{\QQ}$-invariant lattice $\cT_\Pi$ of $W_\Pi^*$ with the following properties:
  \begin{itemize}
    \item $\cT_\Pi\otimes k_v$ is an irreducible $k_v\left[\Gal(\QQbar/\QQ(\mpi))\right]$-module, where $k_v$ is the residue field of $\cO_{F_v}$, and
    \item there exists  $\tau\in\Gal\left(\QQbar/\QQ(\mpi)\right)$ such that $\rank_{\cO_{F_v}} \cT_\Pi/(\tau-1)\cT_\Pi=1$.
  \end{itemize}
 \end{proposition}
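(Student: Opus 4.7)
The plan is to leverage the large-image theorems for the 2-dimensional representation $V_{f,v}$ and deduce everything about its symmetric cube from the representation theory of $\mathrm{SL}_2$. By Ribet's theorem on Galois representations attached to non-CM newforms, for all sufficiently large $p$ the image $H := \rho_{f,v}(G_{\QQ})$ is open in $\GL_2(\cO_{F_v})$, and its reduction $\bar H$ contains $\mathrm{SL}_2(\mathbf{F}_p)$. I would take $\cT_\Pi := \Sym^3 T^*$ for any $G_{\QQ}$-stable $\cO_{F_v}$-lattice $T \subset V_{f,v}$; this is automatically a $G_{\QQ}$-stable lattice in $W_\Pi^*$.

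Let $H_1 \triangleleft H$ (resp.~$\bar H_1 \triangleleft \bar H$) denote the image of $\Gal(\QQbar/\QQ(\mpi))$ at the integral (resp.~residual) level. The quotients $H/H_1$ and $\bar H/\bar H_1$ are quotients of $\Gal(\QQ(\mpi)/\QQ) \cong \ZZ_p^\times$, hence abelian. Since $\mathrm{SL}_2(\mathbf{F}_p)$ is perfect for $p \ge 5$, it lies entirely in $\bar H_1$; similarly, $H_1$ contains an open subgroup of $\mathrm{SL}_2(\ZZ_p)$ (after intersecting, if necessary, with the kernel of the finite-order nebentypus character $\epsilon$). The first bullet of the proposition then follows from the standard fact that $\Sym^3 k_v^2$ is an absolutely irreducible $\mathrm{SL}_2(\mathbf{F}_p)$-representation for $p > 3$, being a basic highest-weight module of weight $3 < p$.

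For the second bullet, I would produce $\tau \in \Gal(\QQbar/\QQ(\mpi))$ by lifting a regular unipotent element from the open subgroup of $\mathrm{SL}_2(\ZZ_p)$ inside $H_1$; explicitly, any lift of $\exp(p^N X)$ for a non-zero nilpotent $X \in \mathfrak{sl}_2(\ZZ_p)$ and $N \gg 0$ will do. A regular unipotent acting on the 2-dimensional $V_{f,v}^*$ gives a single Jordan block of size $4$ with eigenvalue $1$ on the 4-dimensional $\Sym^3 V_{f,v}^*$ (valid because $p > 3$), so $\ker(\tau - 1)$ on $\cT_\Pi \otimes F_v$ has dimension $1$, and $\cT_\Pi/(\tau-1)\cT_\Pi$ therefore has $\cO_{F_v}$-rank $1$.

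The main obstacle is importing the large-image input correctly: one needs Ribet's residual large-image theorem (the finite exceptional set being absorbed by the hypothesis $p \gg 0$), combined with an open image theorem for the lift to characteristic zero, and one has to check that the abelianness of the cyclotomic quotient preserves these statements after restriction to $\Gal(\QQbar/\QQ(\mpi))$. Once these inputs are granted, the two remaining verifications -- the irreducibility of $\Sym^3$ as an $\mathrm{SL}_2(\mathbf{F}_p)$-module and the Jordan type of a regular unipotent on the symmetric cube -- are elementary representation theory.
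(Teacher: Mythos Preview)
Your proposal is correct and follows essentially the same route as the paper: Ribet's large-image theorem, $\cT_\Pi=\Sym^3$ of the $2$-dimensional lattice, irreducibility of $\Sym^3$ over $\operatorname{SL}_2(\mathbf{F}_p)$ for $p>3$, and the single-Jordan-block computation for a regular unipotent acting on the symmetric cube. The only cosmetic difference is that the paper quotes Ribet in the stronger form that the image of $\Gal(\QQbar/\QQ(\mpi))$ already contains the \emph{entire} $\operatorname{SL}_2(\ZZ_p)$, so it takes $\tau\mapsto\left(\begin{smallmatrix}1&1\\0&1\end{smallmatrix}\right)$ and writes out the explicit $4\times4$ matrix on $\Sym^3 T$, rather than passing to an open subgroup and using $\exp(p^N X)$ as you do.
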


 In order to prove this proposition, we quote the following result due to Ribet \cite{ribet85}:

 \begin{lemma}
  If $p$ is sufficiently large, then there exists a $\Gal(\QQbar/\QQ)$-stable $\cO_{F_v}$-lattice $T$ of $V^*_{f,v}$ such that the image of the homomorphism
  \[ \Gal\left(\QQbar/\QQ(\mpi)\right)\longrightarrow \GL_2(T)\cong \GL_2(\cO_{F_v})\]
  contains $\operatorname{SL}_2(\ZZ_p)$. Here, the last isomorphism is given by a basis of $T$.
 \end{lemma}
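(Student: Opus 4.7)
The plan is to invoke Ribet's big-image theorem for the $p$-adic Galois representations attached to non-CM eigenforms, and then perform a short group-theoretic reduction to pass from the image of $G_\QQ$ to the image of $\Gal(\QQbar/\QQ(\mpi))$.

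For the first ingredient, I would appeal to \cite{ribet85}: since $f$ is non-CM of weight $\ge 2$, for every prime $p$ outside a finite set depending on $f$ there is a $\Gal(\QQbar/\QQ)$-stable $\cO_{F_v}$-lattice $T$ in $V^*_{f,v}$ such that the image of $\rho \colon G_\QQ \to \GL(T) \cong \GL_2(\cO_{F_v})$ contains $\operatorname{SL}_2(\ZZ_p)$. Conceptually, the content is that because $f$ has no CM, the Zariski closure of the image of $\rho$ is an $F_v$-form of $\GL_2$, so by the open image theorem for $p$-adic Lie groups the image is open in the associated $\QQ_p$-points; for $p$ sufficiently large the residual image contains $\operatorname{SL}_2(\mathbf{F}_p)$, and a standard pro-$p$ lifting argument then produces a lattice $T$ whose full integral image already contains $\operatorname{SL}_2(\ZZ_p)$.

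Given this input, the restriction to $\Gal(\QQbar/\QQ(\mpi))$ is immediate. Set $G \coloneqq \rho(G_\QQ)$ and $N \coloneqq \rho\bigl(\Gal(\QQbar/\QQ(\mpi))\bigr)$. Because $\Gal(\QQbar/\QQ(\mpi))$ is the kernel of the $p$-adic cyclotomic character $\chi_{\mathrm{cyc}} \colon G_\QQ \to \ZZ_p^\times$, the subgroup $N$ is normal in $G$ and the quotient $G/N$ is a subquotient of $\ZZ_p^\times$, hence abelian. For $p \ge 5$ the profinite group $\operatorname{SL}_2(\ZZ_p)$ is perfect (the finite quotient $\operatorname{SL}_2(\mathbf{F}_p)$ is perfect for $p \ge 5$, and the principal congruence kernel is a uniform pro-$p$ group whose $\ZZ_p$-Lie algebra $\mathfrak{sl}_2$ is perfect). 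Hence the image of $\operatorname{SL}_2(\ZZ_p) \subset G$ in the abelian quotient $G/N$ is trivial, forcing $\operatorname{SL}_2(\ZZ_p) \subset N$, which is the stated conclusion.

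The only substantive step is the appeal to Ribet's theorem; one needs in particular that the lattice $T$ can be chosen so that the integral image contains the \emph{full} $\operatorname{SL}_2(\ZZ_p)$, not merely an open subgroup of it. Given that, the final descent to the cyclotomic subgroup is a two-line consequence of the perfectness of $\operatorname{SL}_2(\ZZ_p)$ and the abelianness of the cyclotomic quotient.
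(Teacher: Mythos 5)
Your argument is correct, and it takes essentially the same route as the paper: the paper offers no argument beyond the citation, attributing the entire statement directly to Ribet \cite{ribet85}. Your explicit reduction from the image of $G_{\QQ}$ to that of $\Gal(\QQbar/\QQ(\mpi))$ --- using that the cyclotomic kernel is normal with abelian quotient and that $\operatorname{SL}_2(\ZZ_p)$ is topologically perfect for $p\ge 5$ --- is a worthwhile addition, since Ribet's theorem as usually stated concerns the image of the full $G_{\QQ}$, and the paper leaves this short descent implicit.
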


 We now prove Proposition \ref{prop:symcubelargeimage}:
 \begin{proof}
  Let $\tau\in  \Gal\left(\QQbar/\QQ(\mpi)\right)$ such that its image in $\GL_2(\cO_{F_v})$ is equal to $\begin{pmatrix}1 & 1 \\ 0 & 1  \end{pmatrix}$. Then the matrix of $\tau$ with respect to the natural basis of $\cT_\Pi=\Sym^3 T$ is given by $\begin{pmatrix} 1 & 3 & 3 & 1\\ & 1 & 2 & 1\\ & & 1 & 1 \\ &&&1\end{pmatrix}$, so since $p>3$, we deduce that $\rank_{\cO_{F_v}} \cT_\Pi/(\tau-1)\cT_\Pi=1$.

  The irreducibility of $\cT_\Pi\otimes k_v$ as a  $k_v\left[\Gal(\QQbar/\QQ(\mpi))\right]$-module is immediate.
 \end{proof}

 \begin{remark}\label{note:Dirchar}
  We can clearly choose $\tau\in  \Gal\left(\QQbar/\QQ^{\mathrm{ab}})\right)$. Hence we have that
  \[ \rank_{\cO_{F_v}} \cT_\Pi(\chi)/(\tau-1)\cT_\Pi(\chi)=1\]
  for every Dirichlet character $\chi$ of prime-to-$p$-conductor.
 \end{remark}


\section{Functoriality}

 \begin{notation}
  Let $\pi$ be the associated automorphic representation of $\GL_2(\Af)$, and write $\Sym^3\pi$ for the image of $\pi$ under the Langlands functoriality transfer from $\GL_2$ to $\GL_4$ attached to $\Sym^3: \GL_2(\CC)\rightarrow \GL_4(\CC)$.
 \end{notation}

 \begin{theorem}
  There exists a globally generic cuspidal automorphic representation $\Pi=\bigotimes_v \Pi_v$ of $\GSp_4(\AA)$ of weight $(2k-1,k+1)$ such that $\Sym^3\pi$ is the functorial lift of $\Pi$ under the embedding $\GSp_4(\CC)\rightarrow \GL_4(\CC)$. In particular, we have
  \begin{enumerate}
   \item $\Pi$ is unramified at the primes not dividing $N$,
   \item $L\left(\Pi,s-\frac{w}{2}\right)=L(\Sym^3\pi,s)$, where $w=3(k+1)$, and
   \item the $p$-adic spin representation $W_\Pi$ associated to $\Pi$ is equal to $\Sym^3V_{f,v}$.
  \end{enumerate}
  Here, we normalise $L(\Pi,s)$ such that the centre of the functional equation is at $s = \tfrac{1}{2}$.
 \end{theorem}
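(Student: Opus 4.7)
The plan is to first lift $\pi$ to a cuspidal automorphic representation of $\GL_4$ and then descend to a globally generic cuspidal representation of $\GSp_4$. As a first step, by the symmetric cube functoriality theorem of Kim--Shahidi, $\Sym^3 \pi$ is an automorphic representation of $\GL_4(\Af)$; it is cuspidal, since $f$ is non-CM (equivalently, $\pi$ is not of solvable polyhedral type, so none of the reducibility obstructions of Kim--Shahidi applies).

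Next, observe that $\Sym^3 \colon \GL_2(\CC) \to \GL_4(\CC)$ factors through $\GSp_4(\CC)$, because the symmetric cube of the standard representation preserves a symplectic form up to the cube of the determinant character. Consequently $\Sym^3 \pi$ carries symplectic symmetry, and I would invoke the characterization of the image of the generic transfer from $\GSp_4$ to $\GL_4$ due to Asgari--Shahidi, together with the generic descent of Ginzburg--Rallis--Soudry, to produce a globally generic cuspidal automorphic representation $\Pi$ of $\GSp_4(\AA)$ whose Langlands transfer to $\GL_4$ is $\Sym^3 \pi$. This is the one non-trivial automorphic input; the remaining assertions are compatibility checks.

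It then remains to verify (1)--(3). For (1), unramifiedness at primes $\ell \nmid N$ is immediate since $\pi_\ell$ is unramified for such $\ell$ and the Satake parameters of $\Pi_\ell$ are the images of those of $\pi_\ell$ under $\Sym^3$. For (2), matching of local $L$-factors at every place follows from this parameter correspondence; the shift $s \mapsto s - w/2$ with $w = 3(k+1)$ is the standard conversion between the unitary normalization (so that the centre of the functional equation of $L(\Pi, s)$ sits at $s = \tfrac{1}{2}$) and the arithmetic normalization of $L(\Sym^3 \pi, s)$. For the weight, at the archimedean place $\pi_\infty$ is the weight-$k$ holomorphic discrete series, with Hodge--Tate weights $\{1-k, 0\}$; its symmetric cube has Hodge--Tate weights $\{3-3k,\, 2-2k,\, 1-k,\, 0\}$, and matching against the archimedean parameter of a $\GSp_4$-representation of weight $(a,b)$ under the conventions of \cite{LZ20} forces $(a,b) = (2k-1, k+1)$. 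Finally (3) is a Chebotarev argument: the Frobenius traces of the spin representation $W_\Pi$ at primes $\ell \nmid Np$ coincide with those of $\Sym^3 V_{f,v}$ by (1), and both are semisimple.

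The main obstacle is the second step: producing the globally generic cuspidal descent $\Pi$ whose transfer matches $\Sym^3 \pi$, which requires both the Kim--Shahidi lift and the characterization of the image of the $\GSp_4 \to \GL_4$ generic transfer. Once $\Pi$ is in hand the remaining assertions are essentially bookkeeping about parameters.
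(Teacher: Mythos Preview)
Your proposal is correct and matches the strategy underlying the paper's proof, which consists solely of the citation ``See \cite{kimshahidi02} and \cite[\S 10]{conti19}'': namely, the Kim--Shahidi symmetric cube lift to $\GL_4$ followed by descent to a globally generic cuspidal $\Pi$ on $\GSp_4$, with (1)--(3) read off from the matching of local parameters. The only cosmetic difference is that you cite Asgari--Shahidi and Ginzburg--Rallis--Soudry for the descent, whereas the paper defers to Conti (who in turn builds on the direct construction of Ramakrishnan--Shahidi); the resulting $\Pi$ is the same.
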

 \begin{proof}
  See \cite{kimshahidi02} and \cite[\S 10]{conti19}.
 \end{proof}

 \begin{remark}
  In more classical terms, $\Pi$ (or more precisely its $L$-packet) corresponds to a holomorphic vector-valued Siegel modular form taking values in the representation $\det^{k+1} \otimes \Sym^{3k-3}$ of $\GL_2(\CC)$.
 \end{remark}


\section{Application to the Bloch--Kato conjecture}

 \begin{theorem}\label{thm:BK}
  Assume that  $f$ is a cuspidal eigenform of weight $k$ and level $1$, and let $V=\left(\Sym^3 V_{f,v}\right)(2k-2)$. Let $p$ be a prime $>3$, and assume that $f$ is ordinary at $p$.  Let $0\leq j\leq k-2$, and let $\rho$ be a finite order character of $\ZZ_p^\times$. If $L\left(\Sym^3\pi\otimes \rho, k+j\right)\neq 0$, then $H^1_{\mathrm{f}}(\QQ,V(-j-\rho))=0$.
 \end{theorem}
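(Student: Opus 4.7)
The plan is to deduce this theorem by applying the main Euler system / Selmer bound theorem for $\GSp(4)$ of \cite{LZ20} to the automorphic representation $\Pi$ constructed in Section 3. By the functoriality theorem, the spin Galois representation $W_\Pi$ is $\Sym^3 V_{f,v}$, so $V(-j-\rho) \cong W_\Pi(2k-2-j)\otimes\rho^{-1}$. Thus the Bloch--Kato Selmer group we wish to kill is the Selmer group of a critical twist of the spin representation of $\Pi$, tensored with a finite-order Dirichlet character of $p$-power conductor. The constraint $0\le j\le k-2$ together with the shift $V = (\Sym^3 V_{f,v})(2k-2)$ means $2k-2-j \in [k,2k-2]$, which is precisely the range of critical twists of $\Sym^3 V_{f,v}$ identified in the opening remark, so the Selmer group under consideration is indeed of critical Hodge--Tate type.

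First, I would verify the hypotheses needed to invoke \cite{LZ20}. The globally generic cuspidal representation $\Pi$ is unramified outside $p$ (since $N=1$) and of the weight specified by the theorem in Section 3. Ordinarity of $\Pi$ at $p$ in the Borel/Klingen sense needed by the Euler system follows from the ordinarity of $f$: if $\alpha$ is the unit root of the Hecke polynomial of $f$ at $p$, then the four spin Frobenius eigenvalues of $\Pi$ are $\alpha^3,\ \alpha^2\beta,\ \alpha\beta^2,\ \beta^3$, whose $p$-adic valuations $0,\,k-1,\,2k-2,\,3k-3$ are strictly increasing, giving the full spin-ordinary filtration. The large image hypothesis is exactly Proposition \ref{prop:symcubelargeimage}, and Remark \ref{note:Dirchar} ensures that the same $\tau$ works after twisting by any Dirichlet character of prime-to-$p$-conductor, including the characters $\rho$ appearing in the theorem.

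Second, I would apply the main theorem of \cite{LZ20}, which bounds the Bloch--Kato Selmer group of a critical twist of $W_\Pi$ in terms of a specific value of the $p$-adic spin $L$-function of $\Pi$, and hence in terms of the classical complex $L$-value at the corresponding critical point via the interpolation formula. The identification $L(\Pi, s-\tfrac{w}{2}) = L(\Sym^3\pi, s)$ with $w=3(k+1)$ translates the non-vanishing hypothesis $L(\Sym^3\pi\otimes\rho, k+j)\neq 0$ into non-vanishing of $L(\Pi\otimes\rho,\, k+j - \tfrac{3(k+1)}{2})$, which is the critical value on the $\GSp(4)$ side corresponding to the twist $W_\Pi(2k-2-j)\otimes\rho^{-1}$. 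Combining the Euler system divisibility with this non-vanishing yields the claimed triviality of $H^1_f(\QQ, V(-j-\rho))$.

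The main technical obstacle is bookkeeping rather than mathematics: matching the Hodge--Tate weight and $s$-variable conventions of the classical symmetric cube $L$-function to those of the automorphic spin $L$-function of $\Pi$ used in \cite{LZ20}, and checking that the critical value $L(\Sym^3\pi\otimes\rho,k+j)$ is the one landing in the non-vanishing region of the interpolation formula for the $p$-adic $L$-function (and not a critical value on the wrong side of the functional equation, where the $p$-adic Euler factors might vanish). Provided this matching is correct and the spin-ordinary filtration is as expected, the Euler system reciprocity law together with Proposition \ref{prop:symcubelargeimage} gives the result with no further input beyond \cite{LZ20}.
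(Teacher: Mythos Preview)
Your approach is correct and matches the paper's own proof: verify that $\Pi$ is Borel-ordinary (from ordinarity of $f$) and that the large-image hypothesis holds (via Proposition~\ref{prop:symcubelargeimage} and Remark~\ref{note:Dirchar}), then invoke \cite[Theorem~D]{LZ20}. One small slip: the character $\rho$ in the theorem is a finite-order character of $\ZZ_p^\times$, hence of $p$-power conductor, not prime-to-$p$ conductor as you write; the reason the rank-$1$ condition still holds after twisting by $\rho$ is that $\tau$ lies in $\Gal(\QQbar/\QQ(\mpi))$ (indeed in $\Gal(\QQbar/\QQ^{\mathrm{ab}})$), so $\rho(\tau)=1$ automatically, while the prime-to-$p$ twists in Remark~\ref{note:Dirchar} are the auxiliary characters needed for the Euler-system norm relations in \cite{LZ20}.
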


 \begin{proof}
  The assumption on $f$ being ordinary at $p$ implies that the representation $\Pi$ is Borel ordinary at $p$. Moreover, Note \ref{note:Dirchar} implies that the big image assumption $\mathrm{Hyp}(\QQ(\mpi),\cT_\Pi(\chi)$ (c.f. \cite[\S. 2]{LZ20}) is satisfied for all Dirichlet characters $\chi$ of prime-to-$p$ conductor.       We are hence in the situation where can apply \cite[Theorem D]{LZ20}.
 \end{proof}

 \begin{remark}
  Observe that if $j \ne \frac{k-2}{2}$ (the central value), then the $L$-value $L\left(\Sym^3\pi\otimes \rho, k+j\right)$ is automatically non-zero, since $L(\Pi, s)$ is non-vanishing for $\operatorname{Re}(s) > 1$.
 \end{remark}


\section{Application to the Iwasawa main conjecture}

 We similarly obtain applications to the cyclotomic Iwasawa Main Conjecture. We preserve the notations and hypotheses of the previous section. Let $\Gamma=\Gal(\QQ(\mpi)/\QQ)$, and write $\Lambda = \cO_{F_v}[[\Gamma]]$ for the Iwasawa algebra.

 \subsection{P-adic $L$-functions for the symmetric cube} We recall the following two known results:

  \begin{theorem}[Algebraicity of $\Sym^3$ $L$-values]
   There exist constants $\Omega_{\Pi}^{+},\Omega_{\Pi}^{-}  \in \CC$, well-defined up to multiplication by $F^\times$, such that for every Dirichlet character $\chi$ and every $0 \le j \le k-2$, the quantity
   \[ L^{\alg}(\Sym^3 f  \otimes \bar\rho, j+k) \coloneqq \frac{G(\rho)^2}{(2\pi i)^j \cdot \Omega^{\varepsilon}_{\Pi}} L(\Sym^3 f \otimes \bar\rho, j+k) \]
   lies in $F(\chi)$, and depends $\Gal(\overline{F} / F)$-equivariantly on $\chi$. Here $\varepsilon = (-1)^j \rho(-1)$.
  \end{theorem}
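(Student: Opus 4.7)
The plan is to transport known algebraicity results for spin $L$-values of $\GSp(4)$ representations through the functorial lift constructed in the previous section. By the identity $L(\Pi, s-\tfrac{w}{2}) = L(\Sym^3\pi, s)$ with $w = 3(k+1)$, the asserted algebraicity of $L(\Sym^3 f \otimes \bar\chi, j+k)$ is equivalent to the corresponding statement for critical values of the (twisted) spin $L$-function of $\Pi \otimes \bar\chi$ at suitably shifted integers. So I would work with $\Pi$ throughout.

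Since $\Pi$ is cuspidal, globally generic, and has holomorphic discrete series at infinity of weight $(2k-1,k+1)$, it contributes to the cuspidal Betti cohomology $H^{3}_{\mathrm{cusp}}(Y_K, \mathcal{V}_\lambda)$ in middle degree of a neat Siegel threefold $Y_K$, with coefficients in the local system associated to $\det^{k+1}\otimes \Sym^{3k-3}$. This cohomology is defined over the reflex field and carries a natural $F$-structure, and the action of complex conjugation decomposes the $\Pi$-isotypic component into a sum of two eigenspaces. The periods $\Omega_{\Pi}^{\pm}$ are then defined as the ratio between an $F$-rational basis vector in each eigenspace and a canonical transcendental generator coming from the archimedean component $\Pi_\infty$; by construction they are well-defined up to $F^\times$.

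I would then express each critical spin $L$-value as a cohomological pairing. Fixing a critical integer $j$ and a Dirichlet character $\chi$, one chooses an integral representation for the degree-$4$ spin $L$-function twisted by $\chi$ -- for instance a Bessel-period integral of $\Pi\otimes\chi$, or a Rankin--Selberg integral against an Eisenstein series on a Klingen or Siegel parabolic subgroup. Such an integral unfolds to express $L(\Pi\otimes\bar\chi, s)$ as the product of the archimedean zeta integral and the pairing of a finite test vector with a rational cohomology class. The archimedean integral produces an explicit power of $2\pi i$ multiplied by one of the two periods $\Omega^{\pm}_\Pi$, the sign being dictated by $(-1)^j\chi(-1)$, which is exactly the normalisation in the statement. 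Galois equivariance in $\chi$ follows because, once the unfolding is performed over the field of definition of $\chi$, all finite data are algebraic and the Galois action on $H^3_{\mathrm{cusp}}(Y_K,\mathcal{V}_\lambda)$ commutes with the Hecke operators.

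The main obstacle is choosing an integral representation that yields the rationality statement \emph{uniformly} in $j$ and $\chi$, with a clean split between the archimedean factor (which must match $(2\pi i)^{?}\Omega^{\varepsilon}_{\Pi}$) and a rational finite part. Establishing such a uniform rationality statement -- rather than the weaker assertion that each individual critical value is algebraic -- is the content of the relevant results of Harris, Schmidt, and others on spin $L$-values of Siegel modular forms, which the paper invokes here rather than re-deriving.
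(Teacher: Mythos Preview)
The paper does not give a proof of this theorem at all: it is stated as a known result and attributed, together with the subsequent theorem on the $p$-adic $L$-function, to Dimitrov--Januszewski--Raghuram \cite{DJR18}, with an alternative proof via coherent cohomology of Siegel threefolds in \cite{LPSZ1}. So your proposal goes well beyond what the paper actually does.

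Your sketch is also a genuinely different route from either of the two references the paper invokes. The primary citation \cite{DJR18} works on $\GL(4)$ directly, using the Shalika model of $\Sym^3\pi$ (which exists precisely because $\Sym^3\pi$ is a transfer from $\GSp(4)$) and the Friedberg--Jacquet integral; the periods arise from Betti cohomology of locally symmetric spaces for $\GL(4)$, not for $\GSp(4)$. The alternative reference \cite{LPSZ1} does work on the Siegel side, but via \emph{coherent} cohomology and higher Hida theory rather than the Betti picture you describe. Your proposed approach---Betti cohomology of the Siegel threefold paired with a Bessel or Klingen--Eisenstein integral---is a third possible strategy, in the spirit of Harris's occult-period methods, and would in principle give a comparable statement; but it is not the one the paper cites, and you misattribute the cited results at the end of your proposal.
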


  \begin{theorem}[Existence of the $\Sym^3$ $p$-adic $L$-function]
   Let $\alpha$ be the unit root of the Hecke polynomial of $f$ at $p$. Then there exists an element $L_{v, \alpha}(\Sym^3 f) \in \Lambda$ with the following interpolation property: for any Dirichlet character $\rho$ of $p$-power conductor, and any $0 \le j \le k-2$, we have
   \[
    L_{v, \alpha}(\Sym^3 f)(j + \rho) = j! (j + k - 1)!\, R_p(\Sym^2 f, \rho, j)\,  L^{\alg}(\Sym^3 f  \otimes \bar\rho, j+k),
   \]
   where
   \[
    R_p(\Sym^3 f, \rho, j)\coloneqq \begin{cases}
     \left(\tfrac{p^{2(j+k-1)}}{\alpha^5 \beta}\right)^m  & \text{if $\rho$ has conductor $p^m > 1$},\\
     \left(1 - \tfrac{p^{j+k-1}}{\alpha^3}\right)\left(1 - \tfrac{p^{j+k-1}}{\alpha^2\beta}\right)\left(1 - \tfrac{\alpha\beta^2}{p^{j+k}}\right)\left(1 - \tfrac{\beta^3}{p^{j+k}}\right) & \text{if $\rho = 1$.}
    \end{cases}
   \]
  \end{theorem}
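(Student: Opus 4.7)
The plan is to obtain $L_{v,\alpha}(\Sym^3 f)$ by applying the $p$-adic $L$-function construction for cohomological cuspidal automorphic representations of $\GL_{2n}$ (with $n=2$) of Barrera--Dimitrov--Williams, referenced in the introduction, to the transfer $\Sym^3\pi$. Equivalently, one builds it from the spin $L$-function of the $\GSp_4$-representation $\Pi$, using the identity $L(\Pi, s - w/2) = L(\Sym^3\pi, s)$ with $w = 3(k+1)$.

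First I would check that $\Sym^3 \pi$ meets the input hypotheses of BDW: it is regular algebraic cuspidal on $\GL_4$ (as $f$ is non-CM by assumption, via Kim--Shahidi), is unramified outside $p$, and admits a refinement at $p$ ordered by $p$-adic valuation of Satake parameters $(\alpha^3, \alpha^2\beta,\alpha\beta^2,\beta^3)$, where $\alpha$ is the unit root and $\beta = p^{k-1}/\alpha$. The ordinarity assumption on $f$ guarantees that the first two parameters $\alpha^3, \alpha^2\beta$ are $p$-adic units, which is exactly the Borel-ordinary refinement of $\Pi$; this is what makes the BDW output an element of $\Lambda$ rather than of a larger distribution algebra.

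Next I would match the BDW interpolation formula with the one asserted. The archimedean contribution produces the $\Gamma$-factors at the critical points, giving the coefficient $j!(j+k-1)!$ corresponding to the Hodge--Tate weights $\{0,1-k,2-2k,3-3k\}$ of the motive. For a ramified twist of conductor $p^m$, the local factor in the BDW recipe is $(p^{s(j)\cdot n} / \alpha_1\alpha_2)^m = (p^{2(j+k-1)}/(\alpha^5\beta))^m$, matching the ramified branch of $R_p$. For the trivial twist, the Coates--Perrin-Riou modified Euler factor attached to the Borel-ordinary refinement yields precisely the four factors
\[
\left(1 - \tfrac{p^{j+k-1}}{\alpha^3}\right)\left(1 - \tfrac{p^{j+k-1}}{\alpha^2\beta}\right)\left(1 - \tfrac{\alpha\beta^2}{p^{j+k}}\right)\left(1 - \tfrac{\beta^3}{p^{j+k}}\right),
\]
coming from removing the non-refined roots in the numerator and the refined roots in the dual Euler factor in the denominator.

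The main obstacle is the final bookkeeping: one must identify the complex period implicit in the BDW construction (a modular-symbol/Betti period on the locally symmetric space for $\GL_4$ or $\GSp_4$) with the period $\Omega_\Pi^\varepsilon$ used in the preceding algebraicity theorem, up to an element of $F^\times$. This is a standard but delicate Eichler--Shimura-type comparison and absorbs any remaining rational factors. Given this identification, the interpolation formula is exactly the one in the statement, and integrality of $L_{v,\alpha}(\Sym^3 f)$ in $\Lambda$ follows from the slope-zero ordinary refinement.
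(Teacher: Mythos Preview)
The paper does not give a proof of this theorem at all; immediately after stating it, the authors simply attribute both this result and the preceding algebraicity theorem to Dimitrov--Januszewski--Raghuram \cite{DJR18}, as a special case of their general construction for automorphic representations of $\GL(4)$ admitting a Shalika model, and note the alternative construction via coherent cohomology of Siegel threefolds in \cite{LPSZ1}. Your overall strategy---apply a general $\GL_{2n}$ (or $\GSp_4$) $p$-adic $L$-function machine to the symmetric-cube lift and then unwind the interpolation formula---is exactly in this spirit, and your detailed matching of the Euler and $\Gamma$-factors is a reasonable expansion of what the paper leaves implicit in the citation.

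Two points of divergence are worth flagging. First, you invoke Barrera--Dimitrov--Williams, but that reference appears in the introduction for a different purpose: it supplies the \emph{variation in finite-slope families} needed inside \cite{LZ20}, and is what forces the level-$1$ hypothesis there. For the existence of the single $p$-adic $L$-function in the ordinary case, the paper points instead to \cite{DJR18} (Shalika model on $\GL_4$) or \cite{LPSZ1}; BDW would presumably also do the job, but it is not the source the authors cite. Second, there is a slip in your ordinarity discussion: $\alpha^2\beta = \alpha\cdot p^{k-1}$ has valuation $k-1$, so it is certainly not a $p$-adic unit. Borel-ordinarity of $\Pi$ is the statement that the normalised slopes of the Satake parameters are as small as the weight permits (equivalently, that the relevant $U_p$-operators for the Borel act with unit eigenvalue after the usual normalisation), not that two of the raw Satake parameters are units. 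Your conclusion that $L_{v,\alpha}(\Sym^3 f)\in\Lambda$ is correct, but the justification you give for it is not.
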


  These results are due to Dimitrov, Januszewski and Raghuram \cite{DJR18}, as a special case of general theorems applying to any automorphic representation $\Pi$ of $\GL(4)$ admitting a Shalika model. An alternative proof using coherent cohomology of Siegel Shimura varieties is given in \cite[Theorem A]{LPSZ1}.

 \subsection{The Iwasawa main conjecture}

  Recall that $V$ denotes the Galois representation $\left(\Sym^3 V_{f,v}\right)(2k-2)$.

  \begin{definition}
   Let $\widetilde{R\Gamma}_{\Iw}(\QQ(\mpi),V)$ denote the Nekov\'a\v{r} Selmer complex with the unramified local condition at $\ell\neq p$, and the Greenberg local condition at $p$ determined by the ordinarity of $f$.
  \end{definition}

  We shall be interested in the degree 2 cohomology $\wH^2_{\Iw}(\QQ(\mpi),V)$. Note that this can be described in more classical terms: if $T$ is a Galois-stable lattice in $V$, and $(-)^\vee$ denotes Pontryagin duality, then for any $0 \le j \le k-2$, there is a canonical isomorphism
  \[  \wH^2_{\Iw}(\QQ_{\mpi},V) \cong \left(\varinjlim_n H^1_{\mathrm{f}}(\QQ(\mu_{p^n}), T^\vee(1 + j)) \right)^\vee(j) \otimes \Qp. \]

  \begin{theorem}
   If the assumptions of Theorem \ref{thm:BK} are satisfied, then $\wH^2_{\Iw}(\QQ(\mpi),V)$ is $\Lambda[1/p]$-torsion, and its characteristic ideal divides the $p$-adic $L$-function $L_{v, \alpha}(\Sym^3 f)$.
  \end{theorem}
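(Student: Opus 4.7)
The plan is to apply the Iwasawa-theoretic half of the machinery of \cite{LZ20} to the globally generic representation $\Pi$ of $\GSp_4(\AA)$ furnished by the symmetric cube lifting, exactly as in the proof of Theorem \ref{thm:BK}. All the running hypotheses have already been verified there: ordinarity of $f$ at $p$ yields Borel-ordinarity of $\Pi$, and Proposition \ref{prop:symcubelargeimage} together with Note \ref{note:Dirchar} supplies the big-image condition $\mathrm{Hyp}(\QQ(\mpi),\cT_\Pi(\chi))$ for every Dirichlet character $\chi$ of prime-to-$p$ conductor.

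Under these hypotheses, the Euler system constructed in \emph{op.\,cit.} produces a non-trivial Iwasawa cohomology class in $\wH^1_{\Iw}(\QQ(\mpi),V)$ whose image under the Perrin-Riou--style regulator attached to the ordinary sub-quotient at $p$ is, by the explicit reciprocity law, a non-zero scalar multiple of a $p$-adic $L$-function interpolating the critical values of $L(\Sym^3 \pi \otimes \rho, s)$. Feeding this input into the Kolyvagin/Mazur--Rubin apparatus packaged in \emph{op.\,cit.} simultaneously forces $\wH^2_{\Iw}(\QQ(\mpi),V)$ to be $\Lambda[1/p]$-torsion and produces the divisibility of its characteristic ideal by (the image under the regulator of) the Euler system class.

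The one genuinely delicate step, and what I expect to be the main obstacle, is to identify the $p$-adic $L$-function produced by the reciprocity law with the function $L_{v,\alpha}(\Sym^3 f)$ of Dimitrov--Januszewski--Raghuram quoted in the previous subsection. Both elements of $\Lambda \otimes \Qp$ satisfy interpolation formulae at the Zariski-dense set of critical specializations $(j,\rho)$ with $0 \le j \le k-2$ and $\rho$ of $p$-power conductor; matching up the Euler factors at $p$, the archimedean $\Gamma$-factors, and the complex and $p$-adic periods on both sides forces the two functions to agree up to a unit of $\Lambda[1/p]$, which is exactly what is needed to transport the divisibility to the statement as formulated.
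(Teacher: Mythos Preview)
Your proposal is correct and follows exactly the paper's approach: the authors' proof is the single sentence ``This follows by applying Theorem C of \cite{LZ20} to the representation $\Pi$'', and your write-up is simply an unpacking of what that theorem says and why its hypotheses are met (Borel-ordinarity from ordinarity of $f$, big image from Proposition~\ref{prop:symcubelargeimage} and Remark~\ref{note:Dirchar}). The ``delicate step'' you flag---matching the $p$-adic $L$-function produced by the reciprocity law with $L_{v,\alpha}(\Sym^3 f)$---is not actually an obstacle here: as noted in the paper, the \cite{LPSZ1} construction (which is the one feeding into \cite{LZ20}) and the \cite{DJR18} construction satisfy the same interpolation formula on a Zariski-dense set, so they agree, and no separate period comparison is needed.
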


  \begin{proof}
   This follows by applying Theorem C of \cite{LZ20} to the representation $\Pi$.
  \end{proof}

\providecommand{\bysame}{\leavevmode\hbox to3em{\hrulefill}\thinspace}
\providecommand{\MR}[1]{}
\renewcommand{\MR}[1]{%
 MR \href{http://www.ams.org/mathscinet-getitem?mr=#1}{#1}.
}
\providecommand{\href}[2]{#2}
\newcommand{\articlehref}[2]{\href{#1}{#2}}

\end{document}